\newtheorem{thm}{Theorem}[section]    % Standard theorem environment
\newtheorem{lemma}[thm]{Lemma}
\newtheorem{cor}[thm]{Corollary}          % Lemma environment with numbering
\theoremstyle{definition}
\newtheorem{eg}[thm]{Example}                                % numbering consecutive to theorems
\newtheorem{question}[thm]{Question}
\newcommand{\ul}[1]{\ensuremath{\underline{\underline{#1}}}}
\newcommand{\bd}{\ensuremath{\partial}}
\newcommand{\co}{\colon\thinspace}
\newcommand{\RR}{\mathbf{R}}
\newcommand{\ZZ}{\mathbf{Z}}
\newcommand{\torus}{\mathrm{T}}
\newcommand{\sphere}{\mathrm{S}}
\newcommand{\disk}{\mathrm{D}}
\newcommand{\id}{\mathrm{id}}
\DeclareMathOperator{\closure}{Cl}
\author{Bell Foozwell}
\address{Trinity College, The University of Melbourne}
\email{bfoozwel@trinity.unimelb.edu.au}
\urladdr{https://sites.google.com/site/bellfoozwell/}
\author{Hyam Rubinstein}
\address{Department of Mathematics and Statistics, The University of Melbourne}
\email{rubin@ms.unimelb.edu.au}
\urladdr{http://www.ms.unimelb.edu.au/~rubin/}
\title{Four-dimensional Haken cobordism theory.}
\date{\today}
\begin{document}

\begin{abstract}

Cobordism of Haken $n$--manifolds is defined by a Haken $(n+1)$--manifold $W$ whose boundary has two components, each of which is a closed Haken $n$--manifold. In addition, the inclusion map of the fundamental group of each boundary component to $\pi_1(W)$ is injective. In this paper we prove that there are $4$--dimensional Haken cobordisms whose boundary consists of any two closed Haken $3$--manifolds. In particular, each closed Haken $3$--manifold is the $\pi_1$--injective boundary of some Haken $4$--manifold.
\end{abstract}

\maketitle

%%%%%%%%%%%%%%%%%%%%   Start of main body of article
\section{Introduction}\label{Introduction}
The authors have defined and studied Haken $n$--manifolds and Haken cobordism theory in previous work \cite{FoozRubin2011}. These manifolds enjoy important properties --- for example, the universal cover of a closed Haken $n$--manifold is $\RR^n$ (see Foozwell \cite{Fooz1}). 
We would like to know if Haken $4$--manifolds are abundant or relatively rare manifolds. We will show that they are abundant in the following sense:\\
\emph{For each pair of closed Haken $3$--manifolds $M, M^\prime$, there is a Haken $4$--manifold $W$ with boundary $\bd W = M \cup M^\prime$. In addition, the inclusion maps induce injections $\pi_1(M) \to \pi_1(W)$ and $\pi_1(M^\prime) \to \pi_1(W)$. The special case when $M^\prime = \varnothing$ is of particular interest.}

Our proof of this result will be obtained in a number of steps. The first step is to show that if $M$ is a torus-bundle over a circle, then there is a Haken $4$--manifold $W$ with boundary $\bd W =M$. We do this in section~\ref{S:Torus-bundles}. We then show a similar result for general surface-bundles in section~\ref{S:Higher genus surface-bundles}. To show that Haken manifolds satisfy our main result, we use a result of Gabai \cite{Gabai} and Ni \cite{Ni} in section~\ref{S:Other Haken manifolds}.

It is well-known that all closed $3$--manifolds are null cobordant, i.e bound compact $4$--manifolds. Davis, Januszkiewicz and Weinberger \cite{DavisJanWein} following on from work in 
\cite{DavisJan}, show that if an aspherical closed $n$--manifold is null cobordant, then it bounds an aspherical $(n+1)$--manifold, and furthermore, the inclusion map of the boundary is $\pi_1$--injective.  Haken $n$--manifolds satisfy the stronger property (than asphericity) that they have universal covering by $\RR^n$, as shown in  \cite{Fooz1}. Moreover for Haken cobordism theory (see \cite{FoozRubin2011}), the inclusion maps of the $n$--manifolds into the $(n+1)$--dimensional cobordism are $\pi_1$--injective. 

\section{Acknowledgements}
We wish to thank Marcel B\"{o}kstedt, Allan Edmonds, Alan Reid and Shmuel Weinberger for helpful comments. The second author would like to thank the Center for Mathematical Sciences of Tsinghua University  for hospitality during part of this research. 
 
The second author was partially supported by the Australian Research Council.

\section{Haken $n$--manifolds}

For simplicity, all manifolds will be assumed to be orientable throughout this paper. 

Let $W$ be a compact $n$--manifold and let $\ul w$ be a finite collection of $(n-1)$--dimensional submanifolds in $\bd W$. We say that $\ul w$ is a \emph{boundary-pattern} if whenever $A_1, \dots, A_i$ is a collection of distinct elements of $\ul w$, then $A_1 \cap \dots \cap A_i$ is an $(n - i)$--dimensional manifold\footnote{The only manifold of negative dimension is the empty set. The empty set is also a manifold in each non-negative dimension.}. A boundary-pattern is \emph{complete} if $\bd W = \cup\{A : A \in \ul w\}$.

The empty boundary-pattern is a special case of a boundary-pattern, and thus a closed manifold is a manifold with boundary-pattern.

Boundary-patterns arise naturally in splitting situations. Suppose that $M$ is a two-sided codimension-one submanifold of $W$. Let $W \vert M$ denote the manifold obtained by splitting $W$ open along $M$. There is a surjective map $q \co W\vert M \to M$, that reverses the process of splitting $W$ open along $M$. We call $q$ the \emph{unsplitting map}. If $W$ has a boundary-pattern $\ul w$, then $B$ is an element of the \emph{natural boundary-pattern} of $W \vert M$ if either
\begin{itemize}
  \item $B$  is a component of $q^{-1}(A)$ for some  $A \in \ul w$, or 
  \item $B$  is a component of $q^{-1}(M)$.
\end{itemize}

Suppose that $(W, \ul w)$ is a manifold with boundary-pattern, that $g : \disk^2 \to \bd W$ is a piecewise-linear map and that the image of $g$ is contained in at most three elements of $\ul w$. If $g(\disk^2)$ is contained in a single element of $\ul w$, say $A_1$, then $g^{-1}(A_2)$ is $\disk^2$ and we say that the \emph{preimage of $g$ is a type 1 disk}. If $g(\disk^2)$ is contained in two distinct elements of $\ul w$, say $A_1$ and $A_2$, then $g^{-1}(A_1) \cup g^{-1}(A_2) = \disk^2$. If both  $g^{-1}(A_1)$  and $g^{-1}(A_2)$ are disks, then we say that the \emph{preimage of $g$ is a type 2 disk}. If $g(\disk^2)$ is contained in three distinct elements of $\ul w$, say $A_1, A_2$ and $A_3$, then $g^{-1}(A_1) \cup g^{-1}(A_2) \cup g^{-1}(A_3) = \disk^2$. If each of $g^{-1}(A_1)$, $g^{-1}(A_2)$ and $g^{-1}(A_3)$ are disks, then we say that the \emph{preimage of $g$ is a type 3 disk}. Type $j$ disks are illustrated in figure~\ref{F:Subdivided disks} for $j \in \{1,2,3\}$.
\begin{figure}[ht!]
\labellist
\small\hair 2pt
\pinlabel $A_1$ at 37 39
\pinlabel $A_1$ at 102 39
\pinlabel $A_2$ at 140 39
\pinlabel $A_1$ at 189 43
\pinlabel $A_2$ at 223 43
\pinlabel $A_3$ at 208 19
\endlabellist
\centering
 \includegraphics{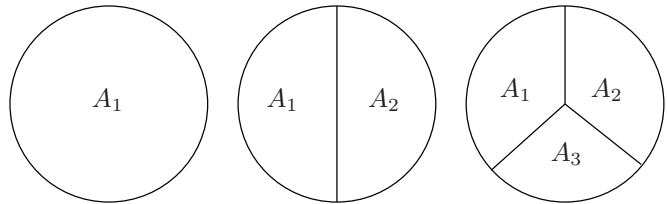}
	\caption{Disks of types 1, 2 and 3.}\label{F:Subdivided disks}
\end{figure}

Let $j \in \{1,2,3\}$ and let $f \co \disk^2 \to W$ be a map such that $f(\bd \disk^2)$ is a loop in $\bd W$ that meets $j$ distinct elements of the boundary-pattern. Suppose that for each such map $f$ there is a map $g \co \disk^2 \to \bd W$ homotopic to $f \text{ rel } \bd $ such that the preimage of $g$ is a type $j$ disk.
 Then we say that $\ul w$ is a \emph{useful} boundary-pattern.
 
 In his solution to the word problem, Waldhausen \cite{Wald} showed that the boundary-patterns that arise in splitting situations for Haken $3$--manifolds can always be modified to be useful. (Note that boundary patterns were formally introduced later by Johannson in \cite{Johannson} --- they were not explicitly mentioned in \cite{Wald}).

A map between manifolds with boundary-patterns should relate the boundary-patterns in a reasonable way. We use the following definition. If $(W,\ul w)$ and $(V, \ul v)$ are manifolds with boundary-patterns, then an \emph{admissible map} is a continuous function $f \co W \to V$ that is transverse to the boundary-patterns and satisfies
\begin{align*}
 \ul w = \bigsqcup_{A \in \ul v} \{B : B \text{ is a component of } f^{-1}(A) \}.
\end{align*}
We write $f \co (W, \ul w) \to (V, \ul v)$ to indicate that the map $f$ is admissible. Admissible homeomorphisms, embeddings and so on are defined in the obvious way. 

If a properly embedded arc can be pushed into the boundary-pattern so that it is contained in no more than two boundary-pattern elements, then we say that the arc is \emph{inessential}. More precisely, let $(J, \ul j)$ be a compact $1$--dimensional manifold with complete boundary-pattern and let $\sigma \co (J, \ul j) \to (W, \ul w)$ be an admissible map. We say that $\sigma$ is an \emph{inessential curve} if there is an admissible map $g \co (\Delta, \ul \delta) \to (W, \ul w)$ such that
 \begin{enumerate}
 	\item $J = \closure \left(\bd \Delta \setminus \bigcup\{A : A \in \ul \delta \} \right)$
 	\item $\ul \delta$ consists of at most two elements,
 	\item $g \vert_J = \sigma$.
 \end{enumerate} 
The boundary-pattern $\ul \delta$ consists of one element if both endpoints of $\sigma$ are contained in the same element of $\ul w$, consists of two elements if the endpoints of $\sigma$ are contained in distinct elements of $\ul w$.  If $J$ is a circle, then $\ul \delta$ is empty. We say that $\sigma \co (J, \ul j) \to (W, \ul w)$ is an \emph{essential curve} if there is no map $g\co (\Delta, \ul \delta) \to (W, \ul w)$ satisfying the three properties above.

An admissible map $f \co (M, \ul m) \to (W, \ul w)$ is \emph{essential} if each essential curve $\sigma \co (J, \ul j) \to (M, \ul m)$ defines an essential curve $f \circ \sigma \co (J, \ul j) \to (W, \ul w)$. Let $M$ be a submanifold of $W$. We say that $(M, \ul m)$ is an \emph{essential submanifold} of $(W, \ul w)$ if the inclusion map is admissible and essential.

Let $(W, \ul w)$ be an $n$--manifold with complete and useful boundary-pattern and let $(M, \ul m)$ be a two-sided codimension-one submanifold of $W$ for which the inclusion map is admissible and essential. Then we say that $(W,M)$ is a \emph{good pair}.
%$$***$$

A \emph{Haken $1$--cell} is an arc with complete and useful boundary-pattern. If $n >1$, then a \emph{Haken $n$--cell} is an $n$--cell with complete and useful boundary-pattern such that each element of the boundary-pattern is a Haken $(n-1)$--cell.

Let $(W_0, \ul w_0)$ be an $n$--manifold with complete and useful boundary-pattern. A finite sequence of good pairs
\begin{align*}
  (W_0, M_0), (W_1,M_1), \dots, (W_k, M_k)
\end{align*}
is called a \emph{hierarchy} if
\begin{enumerate}
	\item $W_{i+1}$ is obtained by splitting $W_i$ open along $M_i$,
	\item $W_{k+1}$ is a finite disjoint union of Haken $n$--cells.
\end{enumerate}	 
A manifold with a hierarchy is called a \emph{Haken $n$--manifold}.

By definition, each element of the boundary-pattern of a Haken $n$--manifold is $\pi_1$--injective. By convention, when we say that a manifold is Haken without explicitly referring to a boundary-pattern, the boundary-pattern is simply the disjoint union of the boundary components. For example, suppose that a manifold $W$ has two boundary components, $X$ and $Y$. If we assert that $W$ is a Haken manifold, then this is meant to imply that $X$ and $Y$ are $\pi_1$--injective in $W$ and that the boundary-pattern of $W$ is $\{X, Y\}$.

Fibre-bundles that have aspherical surfaces as base and fibre provide examples of Haken $4$--manifolds. The hierarchy is obtained by lifting essential curves and arcs in the base surface to the $4$--manifold. These manifolds will play an important role in this paper.

Let $\ul w = \{M_1, \dots, M_j\}$ be a finite collection of closed Haken $n$--manifolds. If $W$ is a connected Haken $(n+1)$--manifold with boundary-pattern $\ul w$, then we say that $W$ is a \emph{Haken cobordism}. If the collection $\ul w$ consists of just two manifolds, then we may regard a Haken cobordism as an equivalence relation between Haken $n$--manifolds.

Our interest is in Haken cobordism as a relation between Haken $3$--manifolds. In section~\ref{S:Other Haken manifolds}, we will give a condition for two connected Haken $3$--manifolds to form the boundary of a Haken cobordism. We will also show that each closed Haken $3$--manifold is the boundary of some Haken $4$--manifold. As a first step, the following lemma was proved in Foozwell's thesis \cite{Fooz}.
\begin{lemma}\label{L: Re-gluing}
If $N$ is obtained from the Haken $3$--manifold $M$ by splitting $M$ open along an incompressible surface $F$ and re-gluing the boundary components, then there is a Haken $4$--manifold $W$ with $\bd W = M \sqcup N \sqcup E$, where $E$ is a surface-bundle over the circle with fibre $F$.
\end{lemma}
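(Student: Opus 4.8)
The plan is to build $W$ directly as a mapping cylinder / product construction and then exhibit a hierarchy. Write $M = F \times [0,1] / (x,0) \sim (\phi(x),1)$ informally; more precisely, let $h \co F \to F$ be the gluing homeomorphism so that $N$ is obtained by cutting $M$ along $F$ (getting $F \times [0,1]$, after possibly passing to the appropriate component) and re-gluing the two copies of $F$ by $h$. First I would form the $4$-manifold $F \times [0,1] \times [0,1]$ and make identifications on the boundary so that one face becomes $M$, one becomes $N$, and the remaining faces assemble into $E = F \times [0,1]/(x,0)\sim(h(x),1)$, the surface-bundle over the circle with fibre $F$ and monodromy $h$. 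Concretely, $M$ sits at $[0,1]\times\{0\}$ in the last two coordinates with the $\phi$-identification, $N$ at $[0,1]\times\{1\}$ with the $\phi\circ h$ (or $h$) identification, and the two faces $\{0\}\times[0,1]$ and $\{1\}\times[0,1]$ get glued to each other by $h$ in a way compatible with both ends, producing $E$; one checks the corners match up. This realizes $\bd W = M \sqcup N \sqcup E$ as a set, and since $F$ is incompressible each of these three pieces is $\pi_1$-injective by a van Kampen / covering-space argument, using that the universal cover of $M$ is $\RR^3$.

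Next I would produce the boundary-pattern and the hierarchy. The natural boundary-pattern on $W$ is $\{M, N, E\}$ (with $M \cap N = \varnothing$, while $M \cap E$ and $N \cap E$ are copies of $F$, and $M\cap N\cap E = \varnothing$), and I would check it is complete and useful — completeness is immediate, and usefulness follows because $F$ is incompressible in $M$, $N$ and $E$, so any disk meeting the pattern can be homotoped into the prescribed type-$j$ configuration. The hierarchy then begins by splitting $W$ along the essential codimension-one submanifold $F \times \{1/2\} \times [0,1]$ (a copy of $F \times [0,1]$), which opens $W$ up into a single piece admissibly homeomorphic to $F \times [0,1]^2$ with an appropriate boundary-pattern; after that, the hierarchy of $W$ is obtained by extending a hierarchy of the surface $F$ — lifting the essential arcs and curves of the hierarchy of $F$ to codimension-one submanifolds of $F \times [0,1]^2$ — exactly as in the remark in the excerpt that surface-by-surface bundles are Haken. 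One must check at each stage that the splitting submanifolds are good pairs (admissible, essential, two-sided) and that the boundary-patterns that appear remain useful; this is where the incompressibility of $F$ is used repeatedly.

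The main obstacle I expect is not the topology of the construction but the bookkeeping of boundary-patterns: verifying that the pattern $\{M,N,E\}$ on $W$ is \emph{useful}, and that after each split the natural boundary-pattern remains useful and the next splitting surface is admissible and essential with respect to it. In particular one has to be careful at the "corner" loci where $F$-copies meet, to ensure the pattern elements are genuine submanifolds meeting in the required dimensions and that type-$2$ and type-$3$ disks can be found for every peripheral loop. I would handle this by choosing the identifications so that $W$ is, away from a collar, a product $F \times (\text{Haken }2\text{-complex})$, reducing every usefulness check to the corresponding (known) statement for the incompressible surface $F$ inside $M$, $N$ and $E$. The remaining verification that $W$ terminates in Haken $4$-cells is then immediate from the hierarchy of $F$ together with the fact that a product of a Haken $1$-cell (arc) with the square $[0,1]^2$ is a Haken $3$-cell, and one more product with an interval gives Haken $4$-cells.
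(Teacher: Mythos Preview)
Your construction has a genuine gap: you assume that $M$ is a surface bundle over the circle with fibre $F$, writing $M = F \times [0,1] / (x,0) \sim (\phi(x),1)$ and claiming that cutting $M$ along $F$ yields $F \times [0,1]$. But the lemma is stated for an arbitrary Haken $3$--manifold $M$ containing an incompressible surface $F$; the result $M\vert F$ of splitting $M$ along $F$ is in general a compact $3$--manifold with two copies of $F$ in its boundary, and there is no reason for it to be a product $F \times [0,1]$. (For example, take $M$ hyperbolic with a quasi-Fuchsian incompressible surface $F$ that is not a fibre.) Consequently your $4$--manifold $F \times [0,1] \times [0,1]$ with face identifications only realises the special case where $M$ and $N$ are both $F$-bundles over the circle; it does not produce a $W$ with $\bd W = M \sqcup N \sqcup E$ in general.

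The paper's construction avoids this by starting not from $F \times [0,1]^2$ but from $M \times [0,1]$, and then attaching a ``handle'' $R(F) \times [0,1]$ (where $R(F) \cong F \times [-\varepsilon,\varepsilon]$) along neighbourhoods of two parallel copies of $F$ in $M \times \{1\}$. Those two parallel copies separate $M \times \{1\}$ into a piece homeomorphic to $M\vert F$ and a collar $F \times I$; after the handle is attached with the twist $h$, these close up respectively to $N$ and to the bundle $E$, while $M \times \{0\}$ remains as the third boundary component. The hierarchy for this $W$ must then invoke a hierarchy of $M$ (not merely of $F$), which is the ``little more work'' the paper alludes to. Your proposed hierarchy---lifting a hierarchy of the surface $F$ alone---would not suffice even if the boundary were correct, precisely because $M\vert F$ is not in general $F \times I$.
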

The idea of the proof is to form $M \times [0,1]$ and attach a copy of $R(F) \times [0,1]$ to a regular neighbourhood of parallel copies of $F \times \{1\}$ in $M \times \{1\}$ as indicated in figure~\ref{F:regluing_fig}. We denote by $R(F)$ the regular neighbourhood of $F$. It is easy to see that the right boundary components are obtained, and with a little more work\footnote{The extra work consists of showing that each boundary component is $\pi_1$--injective, and showing that a hierarchy exists.} one can see that $W$ is indeed a Haken $4$--manifold.  
\begin{figure}[ht!]
\labellist
\small\hair 2pt
\pinlabel {$R(F) \times [0,1]$} at 163 97
\pinlabel {$M \times \{0\}$} [tr] at 200 1
\pinlabel {$M \times \{1\}$} [tr] at 200 78
\pinlabel $F$ [tr] at 103 1
\endlabellist
\centering
	\includegraphics{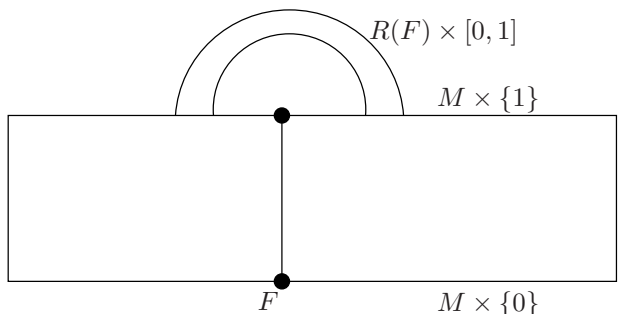}
	\caption{Building a Haken $4$--manifold with three boundary components.}\label{F:regluing_fig}
\end{figure}

After dealing with bundles in the next two sections, we will see how to improve upon  lemma~\ref{L: Re-gluing}.

\section{Torus-bundles}\label{S:Torus-bundles}
In this section, we show that each torus-bundle over the circle is the boundary of some Haken $4$--manifold. 

We first introduce some conventions of notation  and orientation that will be used throughout this paper.

If $g \co S \to S$ is a homeomorphism of a surface $S$, then $S(g)$ is the surface-bundle over the circle with fibre $S$ and monodromy $g$. More concretely,
\begin{align}\label{E: notation}
	S(g) = S \times [0,1] / \left(x,0 \right) \sim \left(g(x), 1 \right).
\end{align}
We will use the above notation for fibre-bundles throughout this paper.

The following conventions regarding orientations on manifolds and their boundaries will be used. If $S$ is an orientable surface, then an orientation for $S$ can be specified by an ordered linearly independent pair of vectors $(w,x)$ at a single point $p \in S$. The standard orientation for $S(g)$ is then $(w,x,y)$ where $y$ is a non-zero vector based at $(p,0)$ tangent to $\{p\} \times [0,1]$ and directed towards $1$. The standard orientation  of the $4$--manifold $S(g) \times [0,1]$ is $(w,x,y,z)$ where $z$ is a non-zero vector based at $(p,0,0)$ tangent to $\{(p,0)\} \times [0,1]$ and directed towards $1$. We write the boundary of $S(g) \times [0,1]$ as 
\begin{align}\label{E: orient}
	\bd \left( S(g) \times [0,1] \right) = S(g^{-1}) \sqcup S(g).
\end{align}
Since $S(g^{-1})$ is homeomorphic to $S(g)$, with a reversal of orientation, we use the term $S(g^{-1})$ in expression~(\ref{E: orient}) to represent the manifold $S(g) \times \{0\}$ with the orientation induced by the outward normal convention.   The term $S(g)$ in expression~(\ref{E: orient}) represents the manifold $S(g) \times \{1\}$, also with the outward normal convention.
\begin{eg}\label{Eg:simple bundle}
Let $\torus^2(\varphi)$ be the torus-bundle over a circle with monodromy $\varphi$ a single Dehn twist.
\begin{figure}[ht!]
\centering
 \includegraphics{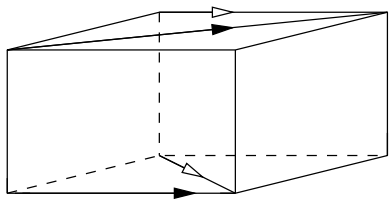}
	\caption{Torus bundle with single Dehn twist.}\label{F:Single twist bundle}
\end{figure}
We represent the torus-bundle $\torus^2(\varphi)$ by considering the torus as the square $[0,1] \times [0,1]$ in the plane with sides identified in the usual way. The monodromy $\varphi$ is represented by the matrix
$\left(
\begin{smallmatrix}
1 & 0 \\
1 & 1
\end{smallmatrix}
\right)$.
We represent $\torus^2(\varphi)$ visually in figure~\ref{F:Single twist bundle}, regarding $\torus^2(\varphi)$ as the quotient space $\left(\mathrm{T}^2 \times [0,1]\right) / (x,0) \sim (\varphi(x),1)$. %Observe that $\torus^2(\varphi)$ is a circle-bundle over a torus with Euler number~$1$. 

 We consider a special case of lemma~\ref{L: Re-gluing} that we will use subsequently. Let $W_1 = \torus^2(\varphi) \times [0,1]$, which is a torus-bundle over an annulus. The boundary of $W_1$ is $\left( \torus^2(\varphi) \times \{0\} \right) \sqcup \left(\torus^2(\varphi) \times \{1\} \right)$. Let us pick out two disjoint parallel torus fibres in $\torus^2(\varphi) \times \{1\}$.
 These are: $T_i  = \torus^2 \times \{i/3\} \times \{1\}$ for $i  = 1$ or $2$. Let $\varepsilon$ be a sufficiently small positive number\footnote{The number $\varepsilon$ is sufficiently small in the sense that $T_1\left(\varepsilon\right) \cap T_2\left(\varepsilon\right) = \varnothing$.}, and consider the $\varepsilon$--neighbourhoods of these tori: 
$T_i \left(\varepsilon\right) = \mathrm{T^2} \times [i/3 - \varepsilon, i/3 + \varepsilon] \times \{1\}$.
Attach a copy of 
$\mathrm{T}^2 \times [-\varepsilon, \varepsilon] \times [0,1]$
to $\torus^2(\varphi) \times \{1\}$ so that $\torus^2 \times [-\varepsilon, \varepsilon] \times \{0\}$ meets $T_1(\varepsilon)$ and $\torus^2 \times [-\varepsilon, \varepsilon] \times \{1\}$ meets $T_2\left(\varepsilon\right)$. We choose the attachment so that the boundary of the resulting manifold $W$ is
\[
  \torus^2(\varphi^{-1}) \sqcup \torus^2(\psi) \sqcup \torus^2(\varphi \circ \psi)
\]
where $\psi \in \mathrm{SL}(2,\ZZ)$. The manifold $W$ is an orientable Haken $4$--manifold 
with three boundary  components. The orientations on the boundary components is based on the orientation convention in expression~(\ref{E: orient}). If we regard $\psi$ as a product of $k$ Dehn twists, then this example shows how to construct a Haken cobordism between torus-bundles with $k+1$ Dehn twists, $k$ Dehn twists and a single Dehn twist.
\end{eg}

\begin{thm}\label{T: All T-bundles}
If $M$ is a torus-bundle over a circle, then there is a Haken $4$--manifold $W$ with boundary $\bd W = M$.
\end{thm}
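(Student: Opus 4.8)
The plan is to induct on the minimal number of Dehn twists needed to present the monodromy, using Example~\ref{Eg:simple bundle} as the inductive engine, with two genuinely different base cases; the second of these is where I expect the real difficulty to lie. Write $M=\torus^2(\varphi)$. Since the mapping class group of the torus is $\mathrm{SL}(2,\ZZ)$ and is generated by the two Dehn twists $\left(\begin{smallmatrix}1&1\\0&1\end{smallmatrix}\right)$, $\left(\begin{smallmatrix}1&0\\1&1\end{smallmatrix}\right)$ together with their inverses, I may write $\varphi=g_1g_2\cdots g_n$ with each $g_i$ a single Dehn twist or its inverse and $n$ minimal. Because $\torus^2(g_1\cdots g_n)\cong\torus^2(g_2\cdots g_ng_1)$ (cyclic permutation is conjugation) and $\torus^2(\varphi)$ is orientation-reversing homeomorphic to $\torus^2(\varphi^{-1})$, I may — unless $n=0$ — arrange that $g_1=\tau$ is a \emph{positive} Dehn twist. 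I also record the ``composition'' fact I will need: gluing two Haken $4$--manifolds along a common $\pi_1$--injective closed boundary component again produces a Haken $4$--manifold, a hierarchy being obtained by first splitting along that interior surface and then concatenating the hierarchies of the two pieces; this should be isolated as a lemma (it is essentially transitivity of Haken cobordism).

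For the inductive step, with $\varphi=\tau\psi$ and $\psi=g_2\cdots g_n$ expressible by $n-1$ Dehn twists, I apply Example~\ref{Eg:simple bundle} with monodromy $\tau$ and parameter $\psi$ to obtain a Haken $4$--manifold $V$ with $\bd V=\torus^2(\tau^{-1})\sqcup\torus^2(\psi)\sqcup\torus^2(\tau\psi)$. By induction there is a Haken $4$--manifold $P$ with $\bd P=\torus^2(\psi)$, and by the base case below there is a Haken $4$--manifold $Q$ with $\bd Q=\torus^2(\tau^{-1})$ (its existence is equivalent to that of a filling of $\torus^2(\tau)$, by reversing orientation). Gluing $P$ and $Q$ onto $V$ along the matching boundary components — using the orientation convention of~(\ref{E: orient}) to make the gluings orientation-reversing — yields, by the composition fact, a Haken $4$--manifold $W$ with $\bd W=\torus^2(\tau\psi)=M$. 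This reduces the theorem to the two base cases: (i) trivial monodromy, and (ii) a single Dehn twist.

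For (i), $M=\torus^3$, and one may take $W=\torus^2\times\Sigma$ with $\Sigma$ a once-punctured torus: this is a $\torus^2$--bundle over the aspherical surface-with-boundary $\Sigma$, hence a Haken $4$--manifold, and $\bd W=\torus^2\times\bd\Sigma=\torus^3$ is $\pi_1$--injective because $\pi_1(\bd\Sigma)\to\pi_1(\Sigma)$ is injective. Case (ii) is the crux. Here $M=\torus^2(\tau)$ is the Nil--manifold with $\pi_1$ the discrete Heisenberg group $\HH$, and — unlike in (i) — it cannot bound a surface-bundle over a surface with one boundary circle: restricting such a bundle to that circle would express $\tau$ as a product of commutators in $\mathrm{SL}(2,\ZZ)$, whereas a single Dehn twist maps to a generator of $\mathrm{SL}(2,\ZZ)^{\mathrm{ab}}=\ZZ/12$. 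A different construction is therefore needed. The route I would take is to realise $W$ as a mapping torus $W=F(h)$, where $F$ is a Haken $3$--manifold with $\bd F=\torus^2$ and $h\co F\to F$ is a homeomorphism whose restriction to $\bd F$ is a Dehn twist — for instance $F$ a Seifert-fibred exterior (a torus-knot exterior, or the orientable twisted $I$--bundle over the Klein bottle) and $h$ a Dehn twist along a vertical annulus $A$ with $\bd A\subset\bd F$, which restricts to the Dehn twist about the fibre slope on $\bd F$. Then $\bd W=\torus^2(h|_{\bd F})=\torus^2(\tau)$, and since $\bd F$ is incompressible in $F$ one gets $\pi_1(\bd W)=\pi_1(\bd F)\rtimes\ZZ\hookrightarrow\pi_1(F)\rtimes\ZZ=\pi_1(W)$ for free, so $\HH$ injects automatically.

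The step I expect to be hardest is producing a hierarchy for $W=F(h)$: one should take a hierarchy of $F$ whose first splitting surface is $A$ (which $h$ preserves), split $W$ first along the $3$--manifold $A(h|_A)\cong\torus^2\times[0,1]$, and then lift the remaining splittings of $F$ through the mapping torus — and one must check that the $4$--manifolds so produced are themselves Haken $4$--cells (or further Haken), and that the lifted splitting surfaces can be made admissible and essential with respect to the boundary-patterns. (If this concrete $F$ proves awkward, a fallback is to invoke the work of Waldhausen to organise the hierarchy of $F$, and more generally to choose $F$ to be any Haken $3$--manifold with incompressible torus boundary admitting a self-homeomorphism restricting to a nontrivial Dehn twist on the boundary.) With case (ii) established, the induction closes and every torus bundle over the circle is the boundary of a Haken $4$--manifold.
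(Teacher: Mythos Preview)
Your inductive framework --- peel off one Dehn twist at a time using Example~\ref{Eg:simple bundle} and Lemma~\ref{L: Re-gluing} --- matches the paper's Lemma~\ref{L:T-bundles} exactly, and your base case~(i) for $\torus^3$ is fine. The problem is entirely in base case~(ii), and it is a genuine gap, not just a missing detail.

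Your proposed $h$ is a Dehn twist along a properly embedded two--sided annulus $A\subset F$ with $\partial A\subset\partial F=\torus^2$. But $\partial A$ then consists of two parallel essential curves $c_1,c_2$ on $\torus^2$, and the restriction $h|_{\partial F}$ is the product $\tau_{c_1}^{\varepsilon_1}\tau_{c_2}^{\varepsilon_2}$ with $\varepsilon_i=\pm1$; on the torus this is $\tau_\alpha^{\,\varepsilon_1+\varepsilon_2}\in\{\tau_\alpha^{2},\mathrm{id},\tau_\alpha^{-2}\}$, never a single Dehn twist. (Concretely: in a product neighbourhood $A\times[-\varepsilon,\varepsilon]$ the twist is $(\theta,s,t)\mapsto(\theta+2\pi f(t),s,t)$, and the sign of the induced twist at $c_i$ is governed by the transverse direction of the normal to $A$ along $c_i$; the two contributions are equal or opposite, but never sum to $\pm1$.) So ``$h$ a Dehn twist along a vertical annulus \ldots\ which restricts to the Dehn twist about the fibre slope on $\partial F$'' is simply false. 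Your fallback --- ``choose $F$ to be any Haken $3$--manifold with incompressible torus boundary admitting a self-homeomorphism restricting to a nontrivial Dehn twist on the boundary'' --- is not obviously available either: for the twisted $I$--bundle over the Klein bottle the mapping class group is finite, and for a hyperbolic $F$ (e.g.\ a hyperbolic knot exterior) Mostow rigidity again forces $\mathrm{MCG}(F)$ to be finite, so no $h$ can restrict to an infinite--order element of $\mathrm{SL}(2,\ZZ)$. Producing such a pair $(F,h)$ is therefore not a routine matter, and you have no candidate that works.

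The paper handles this case by a completely different mechanism (Lemma~\ref{L:single twist}): it passes to a genus--three surface $\Sigma$ and exploits the \emph{lantern relation} $f_\gamma f_\beta f_\alpha = f_1 f_2 f_3 f_4$ in $\mathrm{MCG}(\Sigma)$. Writing the identity as a specific word of length seven in Dehn twists and using Lemmas~\ref{L: Re-gluing} and~\ref{L:equivalent regluings}, one builds seven explicit Haken $4$--manifolds $W_1,\dots,W_7$, each with three surface--bundle boundary components, and then glues them so that all $\Sigma$--bundle boundaries pair off and exactly one $\torus^2(\varphi)$ component survives. The lantern relation is precisely what circumvents your (correct) observation that a single Dehn twist is not a product of commutators in $\mathrm{SL}(2,\ZZ)$: it trades that abelianisation obstruction on the torus for a relation among Dehn twists on a higher--genus surface. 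Your mapping--torus idea would be more direct if it worked, but as it stands the single--twist case needs the paper's lantern argument or something of comparable strength.
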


We will prove theorem~\ref{T: All T-bundles} via a sequence of lemmas. The first of these is a simple observation that is probably well-known.
\begin{lemma}\label{L:equivalent regluings}
Let $F$ and $G$ be closed orientable incompressible surfaces in a closed orientable $3$--manifold $M$. Suppose that $F \cap G$ is a simple closed curve $\alpha$. The manifold obtained by splitting $M$ open along $F$ and regluing via a Dehn twist along $\alpha$ is homeomorphic to the  manifold obtained by splitting $M$ open along $G$ and regluing via a Dehn twist along $\alpha$.
\end{lemma}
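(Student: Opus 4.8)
The plan is to show that the two closed $3$--manifolds in question are both obtained from $M$ by the \emph{same} modification supported in a regular neighbourhood of $\alpha$ --- concretely, by the same Dehn surgery on $\alpha$ --- so that they are homeomorphic (in fact equal as surgered manifolds). To localise the regluing, write $M=(M\,|\,F)/(F_+\!\sim\!F_-)$, where $F_\pm$ are the two copies of $F$ in $\bd(M\,|\,F)$; regluing ``via a Dehn twist along $\alpha$'' means replacing the identification $F_+\xrightarrow{\id}F_-$ by $F_+\xrightarrow{\tau_\alpha}F_-$, where $\tau_\alpha$ is a Dehn twist along $\alpha$. Since $\tau_\alpha$ may be taken to be the identity outside an annular neighbourhood $A_F$ of $\alpha$ in $F$, the new gluing agrees with the old one away from $A_F$; hence the resulting manifold $M_F$ coincides with $M$ outside any regular neighbourhood $N(\alpha)\supseteq A_F$ of $\alpha$, and is obtained from $M$ by cutting $N(\alpha)$ open along $A_F$ and regluing with the annular Dehn twist. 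Define $M_G$ analogously from an annular neighbourhood $A_G$ of $\alpha$ in $G$.

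The next step is to identify this local modification. Inside the solid torus $V=N(\alpha)\cong\sphere^1\times\disk^2$ the annulus $A_F$ is longitudinal (its core is $\alpha$, a core of $V$) and separates $V$ into two solid tori, one of which is a collar of $A_F$; cutting $V$ open along $A_F$ and regluing by the annular Dehn twist therefore again yields a solid torus $V'$, reattached to $M\setminus\inte V$ along $\partial V$ so that a meridian of $V'$ is carried to $\mu\pm\lambda_F$, where $\mu$ is the meridian of $V$ and $\lambda_F\subset\partial V$ is the framing curve obtained by pushing $\alpha$ off itself inside $F$ (a component of $\partial A_F$). Thus $M_F$ is the Dehn surgery on $\alpha$ along the slope $\mu\pm\lambda_F$, and similarly $M_G$ is the Dehn surgery along $\mu\pm\lambda_G$, with the same sign once a direction for the twist is fixed.

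It then remains to see that the two framings agree, i.e.\ that $\lambda_F$ and $\lambda_G$ are isotopic curves on $\partial V$; once this is known the surgery slopes coincide and $M_F\cong M_G$. Because $F\cap G=\alpha$ and this intersection is transverse, one can first shrink $A_F$ and $A_G$ so that $A_F\cap A_G=\alpha$, and then choose the tubular neighbourhood $V$ so that, with $\disk^2=[-1,1]\times[-1,1]$, one has $A_F=\sphere^1\times[-1,1]\times\{0\}$ and $A_G=\sphere^1\times\{0\}\times[-1,1]$. The framings $\lambda_F$ and $\lambda_G$ are then both standard longitudes $\sphere^1\times\{\mathrm{pt}\}$ of $V$, hence isotopic on $\partial V$, and the lemma follows.

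I expect the main obstacle to be exactly this last point: the simultaneous straightening of $A_F$ and $A_G$ in a single neighbourhood of $\alpha$, equivalently ruling out that the four ``prongs'' of $A_F\cup A_G$ are cyclically permuted as one traverses $\alpha$. Two-sidedness of $F$ and $G$ prevents either pair of prongs from interchanging, and the hypothesis $F\cap G=\alpha$ prevents the prongs of $A_F$ from sweeping past those of $A_G$ (if, say, $G$ wound once around $F$ along $\alpha$, one checks that $F\cap G$ would properly contain $\alpha$, a contradiction). Alternatively one can argue directly that $[\lambda_F]=[\lambda_G]$ in $H_1(\partial V)$ and invoke that a primitive homology class on a torus is represented by a unique isotopy class of essential simple closed curve. (Incompressibility of $F$ and $G$, while part of the intended setting, plays no real role in this homeomorphism statement.)
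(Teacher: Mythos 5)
Your proposal is correct and takes the same route as the paper, whose entire proof is the single sentence that ``the result of either operation is simply Dehn surgery on the curve $\alpha$.'' You have simply supplied the details the paper leaves implicit --- in particular the key point that the two surgery framings $\lambda_F$ and $\lambda_G$ agree, which follows as you say from $F\cap G=\alpha$ (the boundary curves $\partial A_F$ and $\partial A_G$ are disjoint essential curves on $\partial V$, hence parallel) --- and your observation that incompressibility is not needed for the homeomorphism statement is also accurate.
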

\begin{proof}
The result of either operation is simply Dehn surgery on the curve $\alpha$.
\end{proof}

\begin{lemma}\label{L:single twist}
If $M_\varphi$ is a torus-bundle over a circle with monodromy $\varphi$ a single Dehn twist, then there is a Haken $4$--manifold $W$ with boundary $\bd W =~M_\varphi$.
\end{lemma}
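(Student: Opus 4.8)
The plan is to reduce Lemma~\ref{L:single twist} to Example~\ref{Eg:simple bundle}. That example already constructs an orientable Haken $4$--manifold $W$ with $\bd W = \torus^2(\varphi^{-1}) \sqcup \torus^2(\psi) \sqcup \torus^2(\varphi \circ \psi)$ for any $\psi \in \mathrm{SL}(2,\ZZ)$, where $\varphi$ is a single Dehn twist. So my first move is to choose $\psi$ cleverly. If I take $\psi = \varphi^{-1}$, then $\varphi \circ \psi = \id$, and the third boundary component $\torus^2(\id) = \torus^2 \times \sphere^1$ is itself a fibre-bundle with aspherical base and fibre, hence a Haken $4$--manifold can be glued on to cap it off — more precisely, $\torus^2 \times \disk^2$ has boundary $\torus^2 \times \sphere^1$ and the torus is $\pi_1$--injective in it, so $\torus^2 \times \disk^2$ is a Haken $4$--manifold with that boundary. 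Gluing $W$ to $\torus^2 \times \disk^2$ along $\torus^2(\id)$ produces a Haken $4$--manifold with boundary $\torus^2(\varphi^{-1}) \sqcup \torus^2(\varphi^{-1})$ (since $\torus^2(\psi) = \torus^2(\varphi^{-1})$ with this choice). That is a Haken cobordism from $M_{\varphi^{-1}}$ to itself; doubling or taking $\torus^2(\varphi^{-1}) \times [0,1]$ on one side then shows $M_{\varphi^{-1}}$ bounds, and since $\varphi^{-1}$ is also a single Dehn twist (of opposite sign, but still a single twist up to the conventions here), this gives the statement. Alternatively, and more cleanly, take $\psi = \id$ directly in the construction: then $\varphi\circ\psi = \varphi$, and $W$ has boundary $\torus^2(\varphi^{-1}) \sqcup \torus^2(\id) \sqcup \torus^2(\varphi)$; cap off $\torus^2(\id)$ with $\torus^2\times\disk^2$ to get a Haken cobordism between $M_{\varphi^{-1}}$ and $M_\varphi$, and separately cap off one of those by the same trick to conclude.

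The cleanest route, which I would actually write up, is: run Example~\ref{Eg:simple bundle} with $\psi$ equal to the identity so that the three boundary components are $\torus^2(\varphi^{-1})$, $\torus^2 \times \sphere^1$, and $\torus^2(\varphi)$. Then glue a copy of $\torus^2 \times \disk^2$ onto the $\torus^2 \times \sphere^1$ boundary component. I must check two things about this gluing. First, the resulting space is still a manifold with the two remaining torus-bundle boundary components, which is clear. Second, and this is the point that needs care, $\torus^2 \times \disk^2$ is a Haken $4$--manifold: it is an $\disk^2$--bundle over $\torus^2$, but $\disk^2$ is not aspherical, so the "fibre-bundles with aspherical base and fibre" remark in the excerpt does not apply directly. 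Instead I would exhibit an explicit hierarchy: split $\torus^2 \times \disk^2$ along $\torus^2 \times \{\text{arc}\}$ to get $\torus^2 \times (\disk^1\times\disk^1)$, then split along vertical tori and annuli coming from a curve system on $\torus^2$, reducing to a Haken $4$--cell, while checking $\pi_1$--injectivity (which is obvious here since everything is a product) and usefulness of the boundary-patterns at each stage.

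The main obstacle is verifying that the glued-up manifold admits a hierarchy as a single Haken $4$--manifold, rather than just being assembled from Haken pieces. Haken-ness is not obviously closed under gluing along $\pi_1$--injective boundary components, because one must produce a compatible hierarchy and check that all the induced boundary-patterns are useful. The way I would handle this is to note that the torus $\torus^2\times\{*\}$ we glue along is $\pi_1$--injective in both pieces and two-sided, so it can serve as the first splitting surface $M_0$ of a hierarchy for the glued manifold: splitting along it recovers $W \sqcup (\torus^2\times\disk^2)$, and then I concatenate a hierarchy for $W$ (which exists by Example~\ref{Eg:simple bundle}, where $W$ is asserted to be Haken) with one for $\torus^2\times\disk^2$. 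The remaining technical work is exactly the "little more work" of the kind mentioned after Lemma~\ref{L: Re-gluing}: confirming that the natural boundary-pattern of $\torus^2\times\disk^2\,\vert\,(\torus^2\times\{*\})$ — i.e. of $W$ with this torus added to its pattern — is useful, and that essentiality of the gluing torus holds. Since all fundamental group maps in sight are injections of products of free abelian groups, these checks are routine; the bookkeeping of boundary-patterns is the only genuinely fiddly part, and that is where I expect to spend most of the effort.
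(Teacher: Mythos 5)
There is a genuine gap, and it is fatal to the whole strategy. The construction of Example~\ref{Eg:simple bundle} only ever produces a pair-of-pants cobordism whose three ends satisfy the relation ``one monodromy is the composite of the other two.'' Taking $\psi = \id$ and capping the trivial end leaves you with a cobordism between $\torus^2(\varphi^{-1})$ and $\torus^2(\varphi)$; taking $\psi = \varphi^{-1}$ leaves a cobordism from $\torus^2(\varphi^{-1})$ to itself. Neither gets you to a null-cobordism: your suggestion that ``doubling or taking $\torus^2(\varphi^{-1}) \times [0,1]$ on one side then shows $M_{\varphi^{-1}}$ bounds'' is false (gluing the two ends of a self-cobordism produces a closed manifold, and attaching a product collar changes nothing --- every manifold is cobordant to itself, but not every manifold bounds in a given category), and your ``cleanest route'' ends with ``separately cap off one of those by the same trick,'' which is exactly the statement of the lemma and hence circular. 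The obstruction is structural: a single Dehn twist is nontrivial in the abelianization $\ZZ/12$ of $\mathrm{SL}(2,\ZZ)$, so no amount of pairing off boundary components of these torus-bundle pieces will leave a single twist unmatched. The paper's proof supplies precisely the missing idea: it passes to a genus-three surface $\Sigma$ and uses the lantern relation, which writes $\id_\Sigma$ as a product of three positive and four negative Dehn twists --- an imbalance of exactly one. Chaining seven three-ended cobordisms $W_7, \dots, W_1$ built from Lemma~\ref{L: Re-gluing}, the chain closes up because $\theta_7 = \theta_0 = \id$, and eight of the nine extra boundary components cancel in pairs, leaving the single $\torus^2(\varphi)$.

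A secondary but also genuine error: $\torus^2 \times \disk^2$ is \emph{not} a Haken $4$--manifold with boundary-pattern $\{\torus^2 \times \sphere^1\}$. Its boundary is the $3$--torus, and the inclusion induces the non-injective map $\ZZ^3 \to \ZZ^2$ killing the $\sphere^1$ factor; since every element of the boundary-pattern of a Haken manifold is $\pi_1$--injective by definition, no hierarchy of the kind you propose can exist. If you do need to cap off $\torus^2(\id) = \torus^3$, the correct object is something like $N \times \sphere^1$ for $N$ a Haken $3$--manifold with incompressible torus boundary (e.g.\ a nontrivial knot exterior), as in the paper's treatment of $F \times \sphere^1$ in section~\ref{S:Higher genus surface-bundles}. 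But even with that repair, the first gap remains and the approach does not prove the lemma.
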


\begin{proof}
Let $\Sigma$ be a closed orientable surface of genus three. We may regard $\Sigma$ as the double of the thrice-punctured disk, which is shown in figure~\ref{F:Lantern relation}.
Three of the four boundary components of the thrice-punctured disk are labelled in figure~\ref{F:Lantern relation}.
Let $\epsilon_i$ be a curve parallel to the boundary component labelled $i$ in figure~\ref{F:Lantern relation}. The curve $\epsilon_4$ is parallel to the unlabelled boundary component. Let $\alpha$, $\beta$ and $\gamma$ be the curves shown in figure~\ref{F:Lantern relation}. %The curve $\alpha$ surrounds boundary components $1$ and $2$, the curve $\beta$ surrounds boundary components $1$ and $3$. 
Up to isotopy, the identity mapping
$\id \co \Sigma \to \Sigma$ can be written as a product of three positive Dehn twists and four negative Dehn twists. This observation is a consequence of the lantern relation \cite{Johnson79} of the mapping class group. 
\begin{figure}[ht!]
\labellist
\pinlabel $\alpha$ at 135 98
\pinlabel $\beta$ at 49 60
\pinlabel $\gamma$ at 182 60
\endlabellist
\centering
 \includegraphics{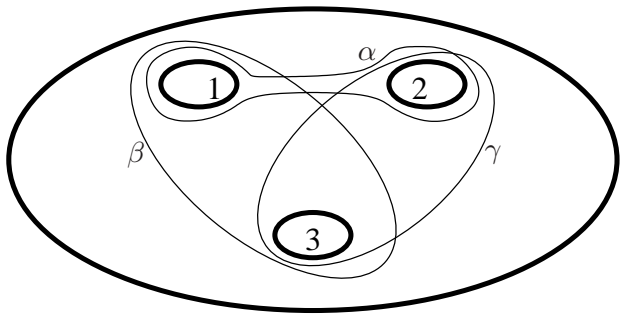}
	\caption{Lantern relation.}\label{F:Lantern relation}
\end{figure}
Let $f_\alpha$ be the right-handed Dehn twist about $\alpha$, and define $f_\beta$ and $f_\gamma$ similarly. Let $f_i$ be the right handed Dehn twist about $\epsilon_i$. The lantern relation is
\[
f_\gamma f_\beta f_\alpha = f_1 f_2 f_3 f_4.
\]
We may write this relation in a number of ways; each $\epsilon_i$ is disjoint from the other curves, so for example, $f_i$ commutes with the other Dehn twists. Thus, up to isotopy, we may write the identity mapping as
\[
  f^{-1}_{1}  f_\gamma f^{-1}_{2} f_\beta  f^{-1}_{3} f_\alpha  f^{-1}_{4}.
\]
We  define the following maps:
\begin{align*}
\theta_7 &= f^{-1}_{1}  f_\gamma f^{-1}_{2} f_\beta  f^{-1}_{3} f_\alpha  f^{-1}_{4} & \theta_3 &= \theta_4 f_\beta^{-1}\\
\theta_6 &= \theta_7 f_4 & \theta_2 &= \theta_3 f_2\\
\theta_5 &= \theta_6 f_\alpha^{-1} & \theta_1 &= \theta_2 f_{\gamma}^{-1}\\
\theta_4 &= \theta_5 f_3 & \theta_0 &= \theta_1 f_1
\end{align*}	

We now show how to construct a Haken $4$--manifold $W_7$ with three surface-bundle boundary components. Specifically,
\[
  \bd W_7 = \Sigma(\theta_7^{-1}) \sqcup \Sigma(\theta_6) \sqcup \torus^2(\varphi).
\] 
The boundary components are written using representatives from the appropriate orientation-preserving homeomorphism class. The orientations of the boundary components are in accordance with the convention from expression~(\ref{E: orient}).

 To see how to build $W_7$, first note that, by the lantern relation, $\theta_7$ is isotopic to the identity, so $\Sigma(\theta_7) = \Sigma \times \sphere^1$. Then observe that $\Sigma \times \sphere^1$ is related to $\Sigma({\theta_6})$ by splitting open along a fibre and regluing by a Dehn twist along the curve $\epsilon_4$ in the fibre. There is an incompressible torus $T$ in $\Sigma \times \sphere^1$ that intersects the fibre in the curve $\epsilon_4$. By lemma~\ref{L:equivalent regluings}, we can also obtain $\Sigma({\theta_6})$ by splitting $\Sigma \times \sphere^1$ open along $T$ and regluing with a Dehn twist. 
 Then lemma~\ref{L: Re-gluing} tells us how to construct $W_7$; we attach a manifold of the form $\torus^2 \times [0,1] \times [0,1]$ to a boundary-component of $\left(\Sigma \times \sphere^1\right) \times [0,1]$.

 Observe that in $\Sigma(\theta_6)$ there is an incompressible torus that intersects the fibre in the curve $\alpha$. By attaching a manifold of the form $\torus^2 \times [0,1] \times [0,1]$ to a boundary-component of $\Sigma(\theta_6) \times [0,1]$ we obtain a Haken $4$--manifold $W_6$ with boundary
\[
  \bd W_6 = \Sigma(\theta_6^{-1}) \sqcup \Sigma(\theta_5) \sqcup \torus^2(\varphi^{-1}).
\]
 
Similarly, there is an incompressible torus in $\Sigma(\theta_5)$ that intersects the fibre in the curve $\epsilon_3$. We then construct a $4$--manifold $W_5$ with boundary
\[
  \bd W_5 = \Sigma(\theta_5^{-1}) \sqcup \Sigma(\theta_4) \sqcup \torus^2(\varphi).
\]
 
We continue creating Haken cobordisms with three boundary components. However, we no longer need to find incompressible tori that intersect the lantern curves. Instead, all the boundary components will be $\Sigma$--bundles over the circle. Lemma~\ref{L: Re-gluing} produces Haken $4$--manifolds $W_4$, $W_3$, $W_2$, and $W_1$ with boundaries as follows:
\begin{align*}
\bd W_4 &= \Sigma(\theta_4^{-1}) \sqcup \Sigma(\theta_3) \sqcup \Sigma(f_\beta^{-1}) &\bd W_2 &= \Sigma(\theta_2^{-1}) \sqcup \Sigma(\theta_1) \sqcup \Sigma(f_\gamma^{-1}) \\
\bd W_3 &= \Sigma(\theta_3^{-1}) \sqcup \Sigma(\theta_2) \sqcup \Sigma(f_2) &\bd W_1 &= \Sigma(\theta_1^{-1}) \sqcup \Sigma(\theta_0) \sqcup \Sigma(f_1) \\
\end{align*}
Note that $\theta_0$ is the identity mapping so $\Sigma(\theta_0) = \Sigma \times \sphere^1$. 

So we have seven orientable Haken $4$--manifolds each with three boundary components. We can glue these seven manifolds together to form a connected manifold $W'$ with boundary
\begin{align*}
\bd W' &= \Sigma(\theta_7^{-1}) \sqcup \torus^2(\varphi) \sqcup \torus^2(\varphi^{-1})\sqcup \torus^2(\varphi) \sqcup \Sigma(f_\beta^{-1})\\
 & \qquad \sqcup \Sigma(f_2) \sqcup \Sigma(f_\gamma^{-1}) \sqcup \Sigma(f_1) \sqcup \Sigma(\theta_0).
\end{align*}
\begin{figure}[ht!]
\labellist
\small\hair 2pt
\pinlabel $T^2(\varphi)$ at 55 103
\pinlabel $T^2(\varphi^{-1})$ at 205 103
\pinlabel $\Sigma(\theta_{7}^{-1})$ at 19 33
\pinlabel $\Sigma(\theta_6)$ at 94 33
\pinlabel $\Sigma(\theta_{6}^{-1})$ at 170 33
\pinlabel $\Sigma(\theta_5)$ at 246 33
\endlabellist
\centering
 \includegraphics{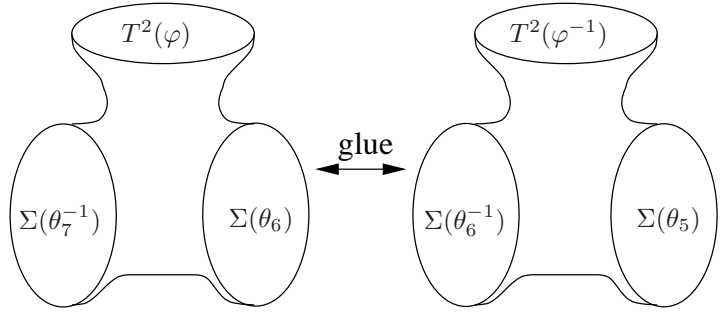}
	\caption{Identifying the $\Sigma(\theta_6)$ boundary-components of $W_7$ and $W_6$ to produce a connected $4$--manifold.}\label{F:CobordFigConnect}
\end{figure}
The idea is illustrated in figure~\ref{F:CobordFigConnect}, which schematically shows the manifolds $W_7$ and $W_6$ being joined together.

We glue eight of these boundary components in pairs, leaving just one boundary component $\torus^2(\varphi)$. That is, we glue $\torus^2(\varphi^{-1})\subset W_6$ to $\torus^2(\varphi) \subset W_5$, glue $\Sigma(f_\beta^{-1})$ to $\Sigma(f_2)$ and glue  $\Sigma(f_\gamma^{-1})$ to $\Sigma(f_1)$.
We also glue $\Sigma(\theta_7)$ to $\Sigma(\theta_0)$.
This can all be done so that the result is orientable. Hence there is an orientable Haken $4$--manifold $W$ with boundary $\bd W = \torus^2(\varphi)$.
\end{proof} 

\begin{lemma}\label{L:T-bundles}
If $M_\psi$ is a torus-bundle over a circle with monodromy $\psi$ a product of a finite number of Dehn twists, then there is a Haken $4$--manifold $W$ with boundary $\bd W = M_\psi$.
\end{lemma}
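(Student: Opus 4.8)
The plan is to induct on the length $k$ of a factorisation $\psi = \varphi_1 \circ \cdots \circ \varphi_k$ into Dehn twists $\varphi_i$ about simple closed curves in $\torus^2$; such a factorisation exists for every $\psi \in \mathrm{SL}(2,\ZZ)$ because the mapping class group of the torus is generated by Dehn twists. The base case $k = 1$ is lemma~\ref{L:single twist}: if $\varphi_1$ is a right-handed Dehn twist this is the lemma verbatim, while if $\varphi_1$ is left-handed, say $\varphi_1 = \mu^{-1}$ with $\mu$ a right-handed Dehn twist, I would apply lemma~\ref{L:single twist} to $\mu$ and then reverse the orientation of the resulting $4$--manifold, using that the Haken property is orientation-independent together with the identification $\overline{\torus^2(g)} \cong \torus^2(g^{-1})$ implicit in expression~(\ref{E: orient}). (If $\psi = \id$ one simply replaces the empty product by $\mu \circ \mu^{-1}$, so that $k \geq 1$ throughout.)

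For the inductive step, assume the lemma holds whenever the monodromy is a product of fewer than $k$ Dehn twists, and write $\psi = \mu \circ \rho$ with $\mu = \varphi_1$ a single Dehn twist and $\rho = \varphi_2 \circ \cdots \circ \varphi_k$ a product of $k - 1$ Dehn twists. The construction of example~\ref{Eg:simple bundle} --- which only uses a regular neighbourhood of a torus fibre and so applies verbatim to an arbitrary torus-bundle in the role of $\torus^2(\varphi)$ there, exactly as in lemma~\ref{L: Re-gluing} --- applied to the torus-bundle $\torus^2(\rho)$ with regluing Dehn twist $\mu$, produces an orientable Haken $4$--manifold $W_0$ with
\[
  \bd W_0 = \torus^2(\mu^{-1}) \sqcup \torus^2(\rho) \sqcup \torus^2(\mu \circ \rho), \qquad \torus^2(\mu \circ \rho) = M_\psi .
\]
Now $\torus^2(\mu^{-1})$ is a torus-bundle whose monodromy is a single Dehn twist, so it bounds an orientable Haken $4$--manifold $W_1$ by the base case; and $\torus^2(\rho)$ bounds an orientable Haken $4$--manifold $W_2$ by the inductive hypothesis. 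After reversing the orientations of $W_1$ and $W_2$ if necessary, I would glue $W_1$ and $W_2$ onto $W_0$ along the boundary components $\torus^2(\mu^{-1})$ and $\torus^2(\rho)$, obtaining a connected orientable $4$--manifold $W$ with $\bd W = M_\psi$.

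To see that $W$ is Haken, note that each torus along which a gluing is performed is $\pi_1$--injective in the piece containing it --- this is part of the definition of a Haken cobordism and is precisely the situation occurring in lemma~\ref{L: Re-gluing} and throughout the proof of lemma~\ref{L:single twist}. Hence one obtains a hierarchy for $W$ by first splitting along the gluing tori and then concatenating the hierarchies of $W_0$, $W_1$ and $W_2$; that gluing Haken cobordisms along $\pi_1$--injective boundary components yields a Haken cobordism is established in \cite{FoozRubin2011}. This completes the induction, and with it the proof.

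I do not expect a genuinely new obstacle here: the geometric content --- producing the cobordism $W_0$ with three boundary components --- is already isolated in example~\ref{Eg:simple bundle} and lemma~\ref{L: Re-gluing}, and preservation of the Haken property under gluing along $\pi_1$--injective boundary tori is the cited input from \cite{FoozRubin2011}. The points needing care are purely bookkeeping --- making sure the factor fed to the inductive hypothesis really is a strictly shorter product of Dehn twists --- together with keeping the orientations consistent, where one repeatedly uses $\overline{\torus^2(g)} \cong \torus^2(g^{-1})$ so that all the identifications, and hence $W$ itself, come out orientable.
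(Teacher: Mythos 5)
Your proposal is correct and follows essentially the same route as the paper: induct on the number of Dehn twists, use the three-boundary-component cobordism of example~\ref{Eg:simple bundle} (equivalently lemma~\ref{L: Re-gluing}) to relate $M_\psi$ to $M_\rho$ and a single-twist bundle, cap the single-twist piece with lemma~\ref{L:single twist} and the $(k-1)$-twist piece by induction. Your extra care with orientations, handedness of the twist in the base case, and the identity monodromy only makes explicit details the paper leaves implicit.
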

\begin{proof}
The construction is similar to that of example~\ref{Eg:simple bundle} and is by induction on the number of Dehn twists, say $k$. Write the monodromy as $\psi = \tau \circ \sigma$ where $\tau$ is a product of $k-1$ Dehn twists and $\sigma$ is a Dehn twist. We modify the torus-bundle $M_\psi \times [0,1]$ by attaching a copy of $\torus^2 \times [-\varepsilon, \varepsilon] \times [0,1]$ to $\varepsilon$-neighbourhoods of disjoint torus fibres in $M_\psi \times \{0\}$ as in example~\ref{Eg:simple bundle}, except we choose the gluing so that the boundary components are $M_\psi$, $M_\tau$ and $M_\sigma$. 

Since $\sigma$ is a single Dehn twist, we can glue on the compact $4$--manifold found in lemma~\ref{L:single twist} to fill in the boundary component 
$M_\psi$. We obtain a manifold $W$ with two boundary components $M_\psi, M_\tau$. It is easy to see that $W$ is a Haken $4$--manifold. The proof now follows by induction since $\tau$ is a product of $k-1$ Dehn twists. So we can find a Haken $4$--manifold with boundary $M_\tau$ and glue this onto $W$ to build the required Haken $4$--manifold with boundary $M_\psi$. 

Note that the case $k=1$ follows from lemma~\ref{L:single twist}.
\end{proof}

Putting the results of the lemmas in this section together constitutes a proof of theorem~\ref{T: All T-bundles}.

\section{Higher genus surface-bundles}\label{S:Higher genus surface-bundles}

We will use lemma \ref{L: Re-gluing} in our proof of the main theorem of this section.
\begin{thm}\label{T:surface bundles bound}
If $M$ is a closed surface-bundle over a circle, then there is a Haken $4$--manifold $W$ with $\bd W = M$.
\end{thm}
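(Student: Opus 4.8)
**The plan is to reduce Theorem~\ref{T:surface bundles bound} to Lemma~\ref{L:T-bundles} (Theorem~\ref{T: All T-bundles}) by "lowering the genus" of the fibre via the chain of lemmas already assembled.** The monodromy $g\co S\to S$ of a surface-bundle $S(g)$ is, up to isotopy, a product of Dehn twists $g=\tau_k\circ\cdots\circ\tau_1$ (every mapping class of an orientable surface is such a product). The strategy mirrors the proof of Lemma~\ref{L:single twist}: I want to realise $S(g)$ as one boundary component of a Haken $4$--manifold whose other boundary components are simpler bundles that I can cap off inductively. The base case is the trivial monodromy, where $S(\mathrm{id})=S\times\sphere^1$ is a surface bundle over a surface and hence Haken by the remark in the excerpt that fibre-bundles with aspherical surface base and fibre are Haken $4$--manifolds — but $S\times\sphere^1$ has empty boundary, so rather I should aim to produce, at each stage, a Haken cobordism between $S(g)$ and $S(g')$ where $g'$ is a product of fewer twists, and the "error" boundary components are single-Dehn-twist bundles $S(\tau_i^{\pm1})$.

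\textbf{The key steps, in order.} First I would set up the inductive framework: assuming every $S'$-bundle with monodromy a product of at most $k-1$ Dehn twists bounds a Haken $4$--manifold (for all surfaces $S'$), I treat $S(g)$ with $g=\tau\circ\sigma$, $\sigma$ a single twist and $\tau$ a product of $k-1$ twists. Second, exactly as in Example~\ref{Eg:simple bundle} and Lemma~\ref{L:T-bundles}, I form $S(g)\times[0,1]$ and attach a copy of $R(c)\times[0,1]\times[0,1]$ along $\varepsilon$--neighbourhoods of two disjoint parallel fibres, where $c$ is the twisting curve of $\sigma$; here $R(c)\cong c\times[-\varepsilon,\varepsilon]$ is an annulus $\times$ an interval, so the attached piece is $\sphere^1\times[-\varepsilon,\varepsilon]\times[0,1]\times[0,1]$, a Haken $4$--manifold. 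Choosing the gluing as in the torus case, the resulting $W$ has $\bd W = S(g)\sqcup S(\tau)\sqcup S(\sigma)$. Third, I need to fill in $S(\sigma)$: since $\sigma$ is a single Dehn twist about a two-sided simple closed curve $c$ in $S$, I run the lantern-relation argument of Lemma~\ref{L:single twist} but relative to $S$ instead of $\torus^2$. Concretely, I embed a once-or-thrice-punctured-disk neighbourhood of $c$'s relevant configuration into $S$ (shrinking to a genus-three subsurface as needed), express $\mathrm{id}_S$ as the product $f_1^{-1}f_\gamma f_2^{-1}f_\beta f_3^{-1}f_\alpha f_4^{-1}$ composed with whatever compensating twists $S$ carries outside the lantern region — but the cleanest route is: $S(\sigma)$ differs from $S\times\sphere^1$ by splitting along a fibre and regluing by a Dehn twist along $c$, and by Lemma~\ref{L:equivalent regluings} I may instead reglue along an incompressible surface $G\subset S\times\sphere^1$ meeting the fibre in $c$ (take $G=c\times\sphere^1$, a torus, which is $\pi_1$--injective). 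Then Lemma~\ref{L: Re-gluing} builds a Haken $4$--manifold with boundary $S(\sigma^{-1})\sqcup(S\times\sphere^1)\sqcup\torus^2(\varphi)$, and the $\torus^2(\varphi)$ and $S\times\sphere^1$ pieces are capped by Lemma~\ref{L:single twist} and the product-bundle observation respectively. Fourth, gluing all these pieces together along their matching boundary components (as in figure~\ref{F:CobordFigConnect}) leaves $S(g)\sqcup S(\tau)$; then the inductive hypothesis caps $S(\tau)$, yielding a Haken $4$--manifold with $\bd W=S(g)=M$. Finally I verify orientability is preserved throughout (using the orientation conventions of expression~(\ref{E: orient})) and that the $\pi_1$--injectivity and hierarchy conditions hold for each gluing, so the total object is genuinely Haken.

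\textbf{The main obstacle} is the single-Dehn-twist step in the higher-genus setting: in Lemma~\ref{L:single twist} the lantern relation is used on a genus-three \emph{closed} surface $\Sigma$, and the incompressible tori that intersect the fibre in the lantern curves are genuinely embedded and $\pi_1$--injective there; when the fibre is an arbitrary higher-genus $S$ one must check that the analogous surfaces $c\times\sphere^1$ (or the subsurface-times-circle pieces) remain incompressible and that the admissible essential structure needed for a good pair is retained. I expect this is where the real care goes, though Lemma~\ref{L:equivalent regluings} makes it tractable by reducing every "reglue along a twist" move to a Dehn surgery that can be realised along a conveniently chosen incompressible torus. The bookkeeping of orientations across the many gluings, while routine, is also a place where sign errors could creep in, so I would state the boundary orientations explicitly at each stage as the excerpt does.
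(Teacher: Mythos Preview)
Your overall strategy is the paper's: induct on the number of Dehn twists, use Lemma~\ref{L: Re-gluing} to produce a three-boundary cobordism $S(g)\sqcup S(\tau)\sqcup S(\sigma)$, and for the base case relate $S(\sigma)$ to $S\times\sphere^1$ via Lemma~\ref{L:equivalent regluings} along the incompressible torus $c\times\sphere^1$, so that the third boundary becomes a torus-bundle capped by Theorem~\ref{T: All T-bundles}. That is exactly what the paper does.

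Two points of clean-up. First, in your inductive step you describe attaching $R(c)\times[0,1]\times[0,1]$ to $\varepsilon$--neighbourhoods of two parallel \emph{fibres}; but a fibre is a copy of $S$, so its neighbourhood is $S\times[-\varepsilon,\varepsilon]$, not an annulus. The attached piece in Lemma~\ref{L: Re-gluing} here is $S\times[-\varepsilon,\varepsilon]\times[0,1]$, which is what actually produces the $S$--bundle $S(\sigma)$ as the third boundary component. Your stated boundary is correct; the description of the handle is not. Second, you say $S\times\sphere^1$ is capped ``by the product-bundle observation'' but leave this vague; the paper simply takes any Haken $3$--manifold $N$ with $\bd N=S$ (e.g.\ a handlebody) and uses $N\times\sphere^1$. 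Once you say this, your concern in the final paragraph evaporates: the only incompressibility check is that $c\times\sphere^1$ is $\pi_1$--injective in $S\times\sphere^1$, which is immediate since $c$ is essential in $S$. The lantern relation is not needed at all in the higher-genus argument --- it was already fully consumed in proving Lemma~\ref{L:single twist}, and you correctly abandon it for the ``cleanest route.''
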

\begin{proof}
As before, the proof is by induction on the number of Dehn twists needed to represent the monodromy. To prove theorem~\ref{T:surface bundles bound}, we must construct a Haken $4$--manifold whose boundary is a surface-bundle with given monodromy.

To start the induction, let $F$ be a closed orientable surface of genus at least two, and let $M_\varphi$ be the surface bundle $F(\varphi)$,% $\left( F \times [0,1] \right) / \left((x,0) \sim (\varphi(x), 1)\right)$ 
where $\varphi$ is a Dehn twist along an essential curve $\alpha$ in $F$. It is clear that we can construct $M_\varphi$ from the product $F \times \sphere^1$ by cutting $F \times \sphere^1$ open along the fibre $F \times \{p\}$ and regluing with a Dehn twist. %We can also view this as a Dehn surgery in $F \times \sphere^1$ along the curve $\alpha$. 
By lemma~\ref{L:equivalent regluings}, we can construct $M_\varphi$ by splitting $F \times \sphere^1$ along an incompressible torus containing $\alpha$ and regluing with a Dehn twist. % result is $M_\varphi$ because we can view this as the same Dehn surgery along $\alpha$.

The manifold $M_\varphi$ is related to the product manifold $F \times \sphere^1$ by a change in homeomorphism
along an incompressible torus. Hence, there is a Haken $4$--manifold $W_1$ with boundary 
$\bd W_1 = M_\varphi \sqcup (F \times \sphere^1) \sqcup E$ where $E$ is the total space of a torus-bundle 
over a circle. In section \ref{S:Torus-bundles} we showed that $E$ is the boundary of a Haken $4$--manifold, $W_2$. The product $F\times \sphere^1$ is also the boundary of a Haken $4$--manifold. For example, take a Haken $3$--manifold $N$ with boundary $\bd N = F$. Then $N \times \sphere^1$ will suffice. We attach $W_2$ and $N \times \sphere^1$ to the appropriate boundary components of $W_1$ to obtain a Haken $4$--manifold with boundary $M_\varphi$.

To prove the general case, we proceed exactly as in lemma \ref{L:T-bundles}. Assume that $M_\varphi$ is a surface bundle over a circle whose monodromy $\varphi$ is a product of $k$ Dehn twists. Write $\varphi = \tau \circ \psi$ where $\psi$ is a single Dehn twist and $\tau$ is a product of $k-1$ Dehn twists. Using lemma \ref{L: Re-gluing} and the case above of a surface bundle with monodromy consisting of a single Dehn twist, we can construct a Haken $4$--manifold with boundary consisting of the disjoint union of $M_\varphi, M_\tau, M_\psi$ and then glue on a Haken $4$--manifold with boundary $M_\psi$, since $\psi$ is a single Dehn twist. By induction on the number $k$ of Dehn twists, there is another Haken $4$--manifold with boundary $M_\tau$ since $\tau$ is a product of $k-1$ Dehn twists. Gluing this on completes the proof of the theorem. 
\end{proof}

\section{Other Haken manifolds}\label{S:Other Haken manifolds}
We first prove an extension of lemma~\ref{L: Re-gluing}, which gives a sufficient condition for two Haken $3$--manifolds to be Haken cobordant.

\begin{thm}\label{T:HakenCobordism}
If $N$ is obtained from the closed connected Haken $3$--manifold $M$ by splitting $M$ open along an incompressible surface $F$ and re-gluing the boundary components, then there is a Haken $4$--manifold $W$ with $\bd W = M \sqcup N$, and boundary-pattern $\ul w = \{M,N\}$.
\end{thm}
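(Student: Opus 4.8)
The plan is to reduce the statement to the surface--bundle results of the previous two sections. Applying Lemma~\ref{L: Re-gluing} to $M$ and the incompressible surface $F$ yields a Haken $4$--manifold $W_0$ with
\[
  \bd W_0 = M \sqcup N \sqcup E ,
\]
whose (complete and useful) boundary-pattern is $\{M, N, E\}$, and where $E$ is a surface-bundle over the circle with fibre $F$. Since $M$ is closed and $F$ is incompressible, $F$ is a closed surface of genus at least one; splitting off one component at a time if $F$ is disconnected, we may assume each component of $E$ is a closed surface-bundle over the circle. Thus the only task that remains is to cap off the boundary component $E$ of $W_0$ while staying inside the class of Haken $4$--manifolds.

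To do this I would invoke Theorem~\ref{T: All T-bundles} when $F$ is a torus and Theorem~\ref{T:surface bundles bound} when $F$ has genus at least two: in either case there is a Haken $4$--manifold $W_E$ with $\bd W_E = E$. The surface $E$ with its orientation reversed is again a closed surface-bundle over the circle (it is $\Sigma(h^{-1})$ if $E = \Sigma(h)$), so these theorems apply to $E$ with either orientation; hence one may choose $W_E$ so that it induces on $E$ the orientation opposite to the one $E$ carries as a boundary component of $W_0$. Gluing $W_E$ to $W_0$ by an admissible homeomorphism identifying the two copies of $E$ then produces an orientable $4$--manifold $W$ with $\bd W = M \sqcup N$; it is connected because $W_0$ is, and its natural boundary-pattern is $\{M,N\}$, as required.

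The step I expect to be the main obstacle — it is essentially the content of the footnote to Lemma~\ref{L: Re-gluing} — is to verify that $W$, assembled in this way, genuinely is a Haken $4$--manifold: that $\{M,N\}$ is a useful boundary-pattern, that $M$ and $N$ are $\pi_1$--injective in $W$, and that $W$ admits a hierarchy. The key point is that $E$ occurs as an element of the boundary-pattern of each of the Haken manifolds $W_0$ and $W_E$, so it is $\pi_1$--injective, hence incompressible, from both sides; combined with the fact that $E$ is itself a closed surface-bundle (so its essential curves remain essential in $E$), this shows that the two-sided, codimension-one submanifold $E \subset W$ makes $(W, E)$ a good pair and that splitting $W$ open along $E$ returns $W_0 \sqcup W_E$. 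Concatenating (interleaving) a hierarchy of $W_0$ with a hierarchy of $W_E$ then gives a hierarchy of $W$ that begins with the good pair $(W, E)$. For the $\pi_1$--injectivity of $M$ and $N$: by van~Kampen's theorem $\pi_1(W)$ is the amalgamated product $\pi_1(W_0) *_{\pi_1(E)} \pi_1(W_E)$, and since $\pi_1(E)$ injects into each factor, both factors inject into $\pi_1(W)$; as $\pi_1(M)$ and $\pi_1(N)$ already inject into $\pi_1(W_0)$ by Lemma~\ref{L: Re-gluing}, they inject into $\pi_1(W)$. This is exactly the sort of gluing of Haken $4$--manifolds along a $\pi_1$--injective boundary component already used repeatedly in Section~\ref{S:Torus-bundles}; carrying out these verifications carefully completes the proof.
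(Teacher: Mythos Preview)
Your proposal is correct and follows essentially the same route as the paper: apply Lemma~\ref{L: Re-gluing} to produce $W_0$ with $\bd W_0 = M \sqcup N \sqcup E$, invoke Theorems~\ref{T: All T-bundles} and~\ref{T:surface bundles bound} to find a Haken $4$--manifold $W_E$ with $\bd W_E = E$, and glue along $E$. You supply more detail than the paper does on why the glued manifold is again Haken (the good-pair/van~Kampen verification), but the strategy is identical.
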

\begin{proof}
We use the construction in the proof of lemma~\ref{L: Re-gluing} to obtain a Haken $4$--manifold $X$ with boundary $\bd X = M \sqcup N \sqcup E$ and boundary-pattern $\ul x = \{M,N,E\}$, where $E$ is a a surface-bundle over a circle with fibre $F$. By theorems~\ref{T: All T-bundles} and~\ref{T:surface bundles bound}, there is another Haken $4$--manifold $Y$ with boundary $\bd Y = E$ and boundary-pattern $\ul y = \{E\}$. We form a quotient space of $X \sqcup Y$ by gluing the $E$ boundary components together via a homeomorphism to obtain the required Haken $4$--manifold $W$.
\end{proof}

%Now we shall prove that each Haken $3$--manifold $M$ is the boundary of some Haken $4$--manifold $W$, but to do so we must make use of the following theorem of Gabai and Ni.

Gabai \cite{Gabai} announced the following result in 1983 with an outline of the proof, and recently Ni \cite{Ni} has provided the details of the proof. 
\begin{thm}\label{T: Gabai-Ni}
Let $M_1$ be a closed Haken $3$--manifold. There is a sequence
\[
M_1, M_2, M_3, \dots, M_n
\]
such that $M_{i+1}$ is obtained from $M_i$ by splitting $M_i$ open along an incompressible surface and re-gluing the boundary components, and $M_n$ is a product $\Sigma \times \sphere^1$, where $\Sigma$ is a closed surface.
\end{thm}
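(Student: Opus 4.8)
The natural approach is to reduce the statement to Gabai's theory of sutured manifold hierarchies, the whole point of which is to simplify a Haken manifold down to a product by cutting along surfaces. The first observation is that it is enough to connect $M_1$, by a finite sequence of split-and-reglue moves along incompressible surfaces, to \emph{some} surface-bundle over the circle. Indeed, if $N = F(\varphi)$ is a surface-bundle with fibre $F$, then $F$ is incompressible in $N$, the split manifold $N\vert F$ is homeomorphic to $F\times[0,1]$, and re-gluing its two copies of $F$ by the identity produces $F\times\sphere^1$; so one further move turns any bundle into a product. Thus the task becomes: build a finite split-and-reglue sequence from $M_1$ to a fibred $3$--manifold.

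The second step is to choose a two-sided incompressible surface $F\subset M_1$, which exists because $M_1$ is Haken. Taking $F$ to be Thurston-norm minimizing in its class, and assuming $F$ is non-separating (the homologically trivial case needs a separate and more delicate set-up that I will not go into here), the split manifold $X = M_1\vert F$ is a compact irreducible $3$--manifold whose boundary consists of two copies $F_+, F_-$ of $F$, and $(X,\gamma)$ with $\gamma = \varnothing$, $R_+(\gamma) = F_+$, $R_-(\gamma) = F_-$ is a taut sutured manifold. By Gabai's theorem there is then a sutured manifold hierarchy
\[
 (X,\gamma) = (X_0,\gamma_0) \rightsquigarrow (X_1,\gamma_1) \rightsquigarrow \cdots \rightsquigarrow (X_m,\gamma_m),
\]
in which each arrow is a decomposition along a properly embedded decomposing surface $S_j$, and $(X_m,\gamma_m)$ is a product sutured manifold $R\times[0,1]$ with $\gamma_m = \bd R\times[0,1]$.

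The plan is then to ``close up'' this hierarchy into a split-and-reglue chain of \emph{closed} $3$--manifolds. For each $j$ one forms a closed manifold $\widehat{X_j}$ from $(X_j,\gamma_j)$ by gluing $R_+(\gamma_j)$ to $R_-(\gamma_j)$ and filling in with solid tori along the tori determined by the sutures $\gamma_j$, the gluings being chosen so that (i) for $j=0$ the gluing is the tautological one, giving $\widehat{X_0} = M_1$, and (ii) passing from $\widehat{X_j}$ to $\widehat{X_{j+1}}$ is realised by splitting $\widehat{X_j}$ along a single incompressible surface --- assembled from the decomposing surface $S_j$ together with the relevant gluing and filling regions --- and re-gluing by a controlled homeomorphism. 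Since $(X_m,\gamma_m)$ is a product sutured manifold, its closure $\widehat{X_m}$ is a product $\widehat{R}\times\sphere^1$ (equivalently a surface-bundle, which the opening remark then converts into a product). Concatenating these moves yields the required sequence $M_1 = \widehat{X_0},\widehat{X_1},\dots,\widehat{X_m} = \Sigma\times\sphere^1$.

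The main obstacle is exactly this closing-up procedure. One has to check that the decomposing surfaces $S_j$, which a priori are only ``taut'' and have boundary running along the sutures, can be organised so that their closures are genuinely incompressible in the corresponding $\widehat{X_j}$; that consecutive $\widehat{X_j}$ and $\widehat{X_{j+1}}$ differ by a \emph{single} split-and-reglue and not by several; that the accumulating sutures never obstruct forming $\widehat{X_j}$ as a closed manifold; and --- for the application within this paper --- that every intermediate $\widehat{X_j}$ is again Haken. This bookkeeping, essentially a combinatorial repackaging of the finite-depth taut foliation that Gabai associates to a Haken manifold, is precisely what Ni's paper carries out, and is where I would expect the real effort to be concentrated.
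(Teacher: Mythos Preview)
The paper does not give its own proof of this theorem: it is quoted as a result announced by Gabai and subsequently proved in detail by Ni, and is used as a black box. So there is no in-paper argument to compare your proposal against.

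That said, your sketch is a fair outline of the Gabai--Ni strategy. The reduction ``any bundle $\Rightarrow$ product via one more cut-and-reglue'' is fine, and invoking a taut sutured manifold hierarchy on $M_1\vert F$ is the right engine. You have also correctly located the genuine difficulty: the ``closing-up'' step, turning each sutured piece $(X_j,\gamma_j)$ into a closed Haken $\widehat{X_j}$ so that consecutive terms differ by a single split-and-reglue along an \emph{incompressible} closed surface, is exactly where the work lies and is what Ni's paper supplies. Two points to flag if you want to push this further yourself: (i) a closed Haken $3$--manifold need not contain a non-separating incompressible surface (it could have $H_1$ finite), so the ``$F$ non-separating'' assumption you make is not innocuous and the separating case cannot simply be set aside; (ii) the filling of sutures by solid tori must be done so that the resulting closed surfaces are incompressible and the intermediate manifolds remain irreducible and Haken, which is not automatic. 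These are precisely the issues absorbed into the citation.
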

Using theorem~\ref{T: Gabai-Ni}, we can show that every pair of closed Haken $3$--manifolds is the boundary of some Haken $4$--manifold.
\begin{thm}
Let $M, M^\prime$ be a pair of closed Haken $3$--manifolds. Then there is a Haken $4$--manifold $W$ with $\bd W = M \sqcup M^\prime$ and boundary-pattern $\ul w = \{M, M^\prime\}$.
\end{thm}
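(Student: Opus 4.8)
The strategy is to reduce the two-component case to the chain of ``re-gluing'' moves supplied by Theorem~\ref{T: Gabai-Ni} and then glue together the cobordisms produced by Theorem~\ref{T:HakenCobordism}. First I would apply Theorem~\ref{T: Gabai-Ni} to $M$: it gives a sequence $M = M_1, M_2, \dots, M_n$ in which each $M_{i+1}$ is obtained from $M_i$ by splitting along an incompressible surface and re-gluing, with $M_n = \Sigma \times \sphere^1$ for some closed surface $\Sigma$. By Theorem~\ref{T:HakenCobordism}, for each $i$ there is a Haken $4$--manifold $X_i$ with $\bd X_i = M_i \sqcup M_{i+1}$ and boundary-pattern $\{M_i, M_{i+1}\}$. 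Concatenating these along the common boundary components $M_2, \dots, M_n$ (gluing $M_{i+1} \subset X_i$ to $M_{i+1} \subset X_{i+1}$ by a homeomorphism, choosing orientations so the result is orientable) yields a Haken $4$--manifold $X$ with $\bd X = M \sqcup (\Sigma \times \sphere^1)$ and boundary-pattern $\{M, \Sigma\times\sphere^1\}$. Doing the same for $M'$ gives a Haken $4$--manifold $X'$ with $\bd X' = M' \sqcup (\Sigma' \times \sphere^1)$.

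Next I would dispose of the product pieces. Since $\Sigma \times \sphere^1$ bounds the Haken $4$--manifold $N \times \sphere^1$, where $N$ is any Haken $3$--manifold with $\bd N = \Sigma$ (exactly as used in the proof of Theorem~\ref{T:surface bundles bound}), one could simply cap off both product ends and obtain $4$--manifolds bounding $M$ and $M'$ separately, which already proves ``each closed Haken $3$--manifold is a $\pi_1$--injective boundary.'' For the stated two-component conclusion, instead of capping I would connect the two product ends to each other: it suffices to produce a Haken $4$--manifold $Z$ with $\bd Z = (\Sigma \times \sphere^1) \sqcup (\Sigma' \times \sphere^1)$ and boundary-pattern those two components. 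When $\Sigma \cong \Sigma'$ one can take $Z = \Sigma \times \sphere^1 \times [0,1]$. In general, take a compact Haken $3$--manifold $N$ whose boundary is $\Sigma \sqcup \Sigma'$ (for instance a suitable compression body or a connected cobordism between the surfaces), and set $Z = N \times \sphere^1$; the boundary-pattern elements $\Sigma \times \sphere^1$ and $\Sigma' \times \sphere^1$ are $\pi_1$--injective since $\Sigma, \Sigma'$ are incompressible in $N$, and a hierarchy for $Z$ is obtained by crossing a hierarchy of $N$ with $\sphere^1$. Finally, form $W$ as the quotient of $X \sqcup Z \sqcup X'$ obtained by gluing the $\Sigma \times \sphere^1$ boundary of $X$ to one end of $Z$ and the $\Sigma' \times \sphere^1$ boundary of $X'$ to the other end, via orientation-compatible homeomorphisms. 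Then $\bd W = M \sqcup M'$ with boundary-pattern $\{M, M'\}$, and $W$ is connected and Haken.

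Two routine points need checking, and these are where I expect the only real work to lie. First, gluing Haken $4$--manifolds along a common closed boundary component which is itself $\pi_1$--injective in each piece produces a Haken $4$--manifold: one must verify that the boundary-pattern of the glued object is still complete and useful, that the remaining boundary components stay $\pi_1$--injective (this follows from $\pi_1$--injectivity of the gluing surface together with a van Kampen / normal-form argument, exactly as in the ``extra work'' footnote to Lemma~\ref{L: Re-gluing}), and that a hierarchy exists — take the gluing surface as the first splitting surface, after which $W$ falls apart into the pieces $X, Z, X'$, each of which already has a hierarchy. Second, one must ensure all orientations can be chosen consistently so the final $W$ is orientable; this is handled exactly as in the proofs of Lemma~\ref{L:single twist} and Theorem~\ref{T:surface bundles bound}, replacing a boundary component by its orientation-reverse where necessary before gluing. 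The main obstacle is thus not a new idea but the verification that the hierarchy and $\pi_1$--injectivity conditions survive the gluings; everything substantive has already been localized into Theorems~\ref{T:HakenCobordism} and~\ref{T: Gabai-Ni}.
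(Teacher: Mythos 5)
Your proposal is correct and follows essentially the same route as the paper: iterate Theorem~\ref{T:HakenCobordism} along the Gabai--Ni sequence for each of $M$ and $M'$ to reach product ends $\Sigma\times\sphere^1$ and $\Sigma'\times\sphere^1$, then join those ends either directly (when $\Sigma\cong\Sigma'$) or through $N\times\sphere^1$ for a Haken $3$--manifold $N$ with $\bd N=\Sigma\sqcup\Sigma'$. The only differences are presentational --- you make explicit the gluing/hierarchy/orientation verifications that the paper leaves implicit (and your parenthetical ``compression body'' suggestion for $N$ should be read with the caveat that both boundary surfaces must be incompressible, as you yourself note).
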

\begin{proof}
Write $M = M_1$ and using the notation of theorem~\ref{T: Gabai-Ni} we have a sequence
\[
M_1, M_2, M_3, \dots, M_n
\]
such that $M_{i+1}$ is obtained from $M_i$ by splitting $M_i$ open along an incompressible surface and re-gluing the boundary components, and $M_n = \Sigma \times \sphere^1$, for some closed orientable aspherical surface $\Sigma$. Using induction on the number of terms in the sequence, we use theorem~\ref{T:HakenCobordism} to obtain a Haken $4$--manifold $X$ with boundary $\bd X = M_1 \sqcup M_n$. Similarly (with obvious notation) there is a Haken $4$--manifold $Y$ with boundary  $\bd Y = M_1^\prime \sqcup M_p^\prime$, where $M_p^\prime = \Sigma^\prime \times \sphere^1$ and $\Sigma^\prime$ is a closed orientable aspherical surface. If $\Sigma^\prime$ is homeomorphic to $\Sigma$ we can glue $X$ to $Y$ along the product boundary components to obtain the required Haken cobordism. Otherwise, take a Haken $3$--manifold $N$ with boundary $\bd N = \Sigma \sqcup \Sigma^\prime$. Then $N \times \sphere^1$ is a Haken $4$--manifold with boundary $(\Sigma \times \sphere^1) \sqcup (\Sigma^\prime \times \sphere^1)$. We can then glue $X$ and $Y$ to the appropriate boundary components of $N \times \sphere^1$ to obtain the required Haken cobordism. 
\end{proof}

\begin{cor}\label{C:HakenBoundary}
If $M$ is a closed Haken $3$--manifold, then there is a Haken $4$--manifold $W$ with $\bd W = M$ and boundary-pattern $\ul w = \{M\}$.
\end{cor}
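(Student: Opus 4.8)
The plan is to reduce to the theorem just proved. I would apply that theorem with the second manifold taken to be $\torus^2 \times \sphere^1$; this is a torus-bundle over the circle with trivial monodromy, hence a closed Haken $3$--manifold, so the theorem produces a Haken $4$--manifold $W_0$ with $\bd W_0 = M \sqcup (\torus^2 \times \sphere^1)$ and boundary-pattern $\{M, \torus^2 \times \sphere^1\}$. By Theorem~\ref{T: All T-bundles} there is a Haken $4$--manifold $W_1$ with $\bd W_1 = \torus^2 \times \sphere^1$ and boundary-pattern $\{\torus^2 \times \sphere^1\}$. I would then form the quotient of $W_0 \sqcup W_1$ obtained by identifying the two copies of $\torus^2 \times \sphere^1$ via a homeomorphism --- exactly the gluing used to join $X$ to $Y$ in the proof of Theorem~\ref{T:HakenCobordism} --- and call the result $W$. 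By construction $\bd W = M$, and since the identification is along an entire boundary component, the boundary-pattern of $W$ is $\{M\}$.

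What remains is precisely the kind of verification flagged in the footnote to Lemma~\ref{L: Re-gluing}: that $W$ really is a Haken $4$--manifold and that $M$ stays $\pi_1$--injective in it. The second point is automatic, because $\torus^2 \times \sphere^1$ is $\pi_1$--injective in each of $W_0$ and $W_1$ (it is an element of each boundary-pattern), so van Kampen's theorem expresses $\pi_1(W)$ as an amalgamated product over $\pi_1(\torus^2 \times \sphere^1)$ in which $\pi_1(W_0)$, and therefore $\pi_1(M)$, injects. For the first point one exhibits a hierarchy for $W$ by splitting first along the two-sided incompressible hypersurface $\torus^2 \times \sphere^1$, which returns $W_0 \sqcup W_1$, and then concatenating the hierarchies of $W_0$ and $W_1$. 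This hierarchy bookkeeping --- together with checking that the boundary-patterns remain complete and useful as they are updated --- is routine, but it is where essentially all the care lies, and it is the same issue that arises at every gluing in the paper; I expect it to be the main obstacle.

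As an alternative to using the previous theorem as a black box, one may simply truncate its proof: apply Theorem~\ref{T: Gabai-Ni} to $M$ to obtain a sequence $M = M_1, \dots, M_n = \Sigma \times \sphere^1$, apply Theorem~\ref{T:HakenCobordism} inductively to build a Haken $4$--manifold $X$ with $\bd X = M \sqcup (\Sigma \times \sphere^1)$, and cap off the end $\Sigma \times \sphere^1$ using Theorem~\ref{T: All T-bundles} or Theorem~\ref{T:surface bundles bound} according as $\Sigma$ is a torus or has higher genus.
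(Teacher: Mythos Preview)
Your proposal is correct, and both routes you outline are exactly the kind of argument the paper has in mind. The paper states Corollary~\ref{C:HakenBoundary} without proof, treating it as immediate from the preceding theorem together with the earlier bundle results; your first approach (take $M' = \torus^2 \times \sphere^1$ and cap off via Theorem~\ref{T: All T-bundles}) and your alternative (truncate the proof and cap off $\Sigma \times \sphere^1$ directly) both realise this in the obvious way.
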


\section{Hyperbolic case}
In Long and Reid \cite{Long2000}, it is shown that if a closed hyperbolic $3$--manifold $M$ is the totally geodesic boundary of a compact hyperbolic $4$--manifold $W$, then $\eta(M)$ takes an integer value. In \cite{Long2000} $M$ is said to \emph{geometrically bound} $W$. On the other hand, Meyerhoff and Neumann \cite{Meyer92}, show that $\eta(N_\alpha)$ takes a dense set of values in $\RR$ for the set $\{N_\alpha\}$ of Dehn surgeries on a hyperbolic knot in $S^3$. So this implies that `generically' hyperbolic $3$--manifolds do not geometrically bound hyperbolic $4$-manifolds. 

The existence of $\pi_1$--injective $2$-tori in the Haken $4$--manifolds constructed in Corollary~\ref{C:HakenBoundary} gives an obvious obstruction to these $4$--manifolds admitting hyperbolic or even strictly negatively curved metrics. 

In \cite{Long2001} Long and Reid give examples of $n$--dimensional hyperbolic manifolds which geometrically bound hyperbolic $(n+1)$--dimensional hyperbolic manifolds, for all $n$. 

\section{Some questions}\label{S:Some questions}
The Haken $4$--manifolds that we have constructed in this paper fall into a special class. In a sense, they are analogues of the graph manifolds of Waldhausen. Other examples of Haken $4$--manifolds exist. For example, the hyperbolic $4$--manifolds of Ratcliffe and Tschantz \cite{Rat-Tsch} are all finitely covered by Haken $4$--manifolds (see \cite{FoozRubin2011} for a proof of this). In \cite{FoozRubin2011}, examples of Haken $4$--manifolds which admit metrics of strictly negative curvature but which do not admit hyperbolic metrics are given. 

\begin{question}
If $M$ is a closed Haken $3$--manifold, does there exist a Haken $4$--manifold $W$ with $\bd W = M$ and which contains only non-separating submanifolds in its hierarchy? (Note that then the complement of the hierarchy is a single $4$--cell.)
\end{question}

\begin{question}
Which closed hyperbolic Haken $3$--manifolds $M$ geometrically bound hyperbolic Haken $4$--manifolds? Are there other obstructions than that in \cite{Long2000} that the eta invariant of $M$ must be an integer? What about the situation if the Haken $4$--manifold admits a metric of strictly negative or non-positive curvature? Is it still true that the eta invariant of $M$ must be an integer in this case?
\end{question}

\begin{question}
For $n >3$, what are the standard cobordism classes for Haken $n$--manifolds? We say that Haken $n$-manifolds $N$ and $N'$ belong to the same standard cobordism class if there is an $(n + 1)$--manifold $W$ for which $\bd W = N \sqcup N'$.  In private communication, Allan Edmonds has contructed a Haken $4$--manifold with odd Euler characteristic, so we know, for example, that Haken $4$--manifolds need not be null cobordant.
\end{question}
%%%%%%%%%%%%%%%%%%%%   End of main body of article
%
%                             References
%
%   BiBTeX users uncomment the following line:
%
\bibliographystyle{amsplain}
\bibliography{H4Mbiblio}

\end{document}